\DeclareMathOperator{\Eff}{{Eff}}
\DeclareMathOperator{\Nef}{{Nef}}
\DeclareMathOperator{\mult}{{mult}}
\DeclareMathOperator{\rank}{{rank}}
\DeclareMathOperator{\NE}{{NE}}
\DeclareMathOperator{\Bs}{{Bs}}
\newtheorem{thm}{Theorem}[section]
\newtheorem{lem}[thm]{Lemma}
\newtheorem{corl}[thm]{Corollary}
\newtheorem{xrem}{Remark}
\begin{document}
\baselineskip=17pt

\subjclass[2010]{Primary 14J60; Secondary 14H60, 14J10}
\keywords{Nef cone, Seshadri constants}
\author{Rupam Karmakar}  
\author{Snehajit Misra}
\address{Institute of Mathematical Sciences\\ CIT Campus, Taramani, Chennai 600113, India and Homi Bhabha National Institute, Training School Complex, Anushakti Nagar, Mumbai 400094, India}
\email[Rupam Karmakar]{rupamk@imsc.res.in}

\address{Institute of Mathematical Sciences\\ CIT Campus, Taramani, Chennai 600113, India and Homi Bhabha National Institute, Training School Complex, Anushakti Nagar, Mumbai 400094, India}
\email[Snehajit Misra]{snehajitm@imsc.res.in}
\begin{abstract}
Let $X = \mathbb{P}(E_1) \times_C \mathbb{P}(E_2)$ where $C$ is a smooth curve and let $E_1$, $E_2$ be vector bundles over $C$. In this paper, we extend the results in \cite{K-M-R} by computing the nef cone of $X$ without restriction on the rank 
or semistability of $E_1$ and $E_2$. We also study the Seshadri constants of ample line bundles on $X$.  We calculate the Seshadri constants in some cases and give bounds in some of the remaining cases.
\end{abstract}

\title{Nef cone and Seshadri constants on products of projective bundles over curves}
\maketitle
\vskip 4mm
\section{introduction}
The nef cone $\Nef(X) \subseteq N^1(X)$ of nef divisors on a projective variety $X$ is an important invariant which gives useful information about the projective embeddings of $X$.
The nef cones of various smooth irreducible projective varieties have been studied by many authors in the last few decades ( See \cite{L1} (Section 1.5), \cite{M}, \cite{Fu}, \cite{B-P}, \cite{M-O-H}, \cite{K-M-R} for more details ).
In his paper \cite{M}, Miyaoka found that in characteristic 0, the nef cone of $\mathbb{P}_C(E)$ is determined by the smallest slope of any nonzero torsion free quotient of $E$. Then, \cite{Fu} generalized this to arbitrary co-dimension cycles showing that the 
effective cones of cycles ( and their duals ) on $\mathbb{P}_C(E)$ are determined by the numerical data in the  Harder-Narasimhan filtration of $E$. \cite{B-P} studied the nef cone of divisors on Grassmann bundles $Gr_s(E)$ and flag bundles over smooth curves and 
extended Miyaoka's result to characteristic $p$. In \cite{K-M-R}, nef cones $\Nef(\mathbb{P}_C(E_1) \times_C \mathbb{P}_C(E_2))$ of divisors are computed under the assumption that $E_1$ and $E_2$ are semistable bundles over a smooth curve $C$ and in few other cases
e.g., rank$(E_1) = $ rank$(E_2) = 2$. Note that in \cite{K-M-R}, the cones are 3-dimensional while the literature abounds with 2-dimensional examples (e.g., $\Nef(\mathbb{P}_C(E))$ or $\Nef(Gr_s(E))$ etc.).
In this paper, we extend the results in \cite{K-M-R}, by computing $\Nef(\mathbb{P}_C(E_1) \times_C \mathbb{P}_C(E_2))$ without restriction on the rank or semistability of $E_1$ and $E_2$.

Let $X$ be a smooth complex projective variety and let $L$ be a nef line bundle on $X$. The Seshadri constant of $L$ at $x \in X$ is defined as 
\begin{align*}
 \varepsilon(X,L,x) := \inf_{x \in C} \hspace{1mm}   \bigl\{\frac{L\cdot C}{\mult_xC}\bigr\}
\end{align*}
where the infimum is taken over all irreducible curves in $X$ passing through $x$ having the multiplicity $\mult_xC$ at $x$. One can easily check that it is enough to take the infimum over irreducible and reduced curves $C$. The Seshadri criterion for ampleness 
says that $L$ is ample iff $\varepsilon(X,L,x) >  0$ for all $x \in X$. The Seshadri constant $\varepsilon(X,L,x)$ of a nef line bundle $L$ at a point $x\in X$ is an interesting invariant of $L$ that measures local positivity around $x$ in several ways: some 
numerical, some cohomological via asymptotic jet separation, and even via differential geometry or from arithmetic height theory.

If $L$ is an ample line bundle, then $\varepsilon(X,L,x) \leq \sqrt[n]{L^n}$ for all $x \in X$, where $n$ is the dimension of $X$ and $L^n$ is the  $n$ fold self-intersection of $L$. Hence, $\varepsilon(X,L,x) \in (0,\sqrt[n]{L^n}]$. Usually, Seshadri constants are
very hard to calculate and most of the time,  one tries to give bounds which sharpen the above mentioned bounds. To get an overview of the current research on Seshadri constants, see \cite{B-D-H-K-K-S-S}.

For an ample line bundle $L$ on $X$, we define
\begin{align*}
 \varepsilon(X,L,1) :=  \sup_{x\in X} \hspace{1mm} \bigl\{\varepsilon(X,L,x)\bigr\}
\end{align*}
\begin{align*}
 \varepsilon(X,L) : = \inf_{x\in X} \hspace{1mm} \bigl\{ \varepsilon(X,L,x)\bigr\}
\end{align*}
so that $0 < \varepsilon(X,L) \leq \varepsilon(X,L,x) \leq \varepsilon(X,L,1) \leq \sqrt[n]{L^n}$ for every point $x \in X$.

Miranda (See \cite{L1}, Example 5.2.1) constructs examples on surfaces where Seshadri constants are arbitrarily small. More precisely, he showed that 
given a positive real number $\delta > 0$, there is an algebraic surface $X$ ( which is obtained by blowing up the projective plane $\mathbb{P}^2$ at suitably chosen points ) and an ample line bundle $L$ on $X$ such that $\varepsilon(X,L,x) < \delta$ 
at a particular point $x \in X$. 

Seshadri constants on ruled surfaces $\mathbb{P}_C(E)$ (rank$(E)$ = 2) over a smooth curve $C$ have been studied by many authors (see \cite{Ga}, \cite{Sy}, \cite{H-M} etc. ). More generally, \cite{B-H-N-N} computes the Seshadri constants of ample line bundles on the Grassmann bundle $Gr_r(E)$ over 
a smooth curve $C$ under the assumption that $E$ is an unstable bundle on $C$. In particular, under some suitable conditions on the Harder-Narasimhan filtration of $E$, \cite{B-H-N-N} computes the Seshadri constants of ample line bundles on 
$\mathbb{P}_C(E)$, whenever $E$ is an unstable vector bundle over a smooth curve $C$. Some bounds for the Seshadri constants of ample bundle on ruled surfaces $\mathbb{P}_C(E)$ over smooth curve $C$ are known due to \cite{Ga}. However, the Seshadri constants
of ample line bundles on $\mathbb{P}_C(E)$ are not completely known in the general set up.

Let $E$ be a semistable bundle of rank $r$ and degree $0$ over a smooth curve $C$. Then, $\Nef(\mathbb{P}_C(E)) = \bigl\{ a\xi + bf \mid a, b \in \mathbb{R}_{\geq 0} \bigr \}$, where $\xi$ and $f$ denote the numerical classes of 
$\mathcal{O}_{\mathbb{P}_C(E)}(1)$ and a fibre of the projectivization map respectively. For any point $x \in \mathbb{P}_C(E)$ and any ample line bundle numerically equivalent to $a\xi + bf$, it is known that
$\varepsilon( \mathbb{P}_C(E), a\xi + bf, x) \geq \min\{a, b\}$. 
If $a = \min\{a, b\}$, then the Seshadri constants can be computed by a line through $x$ in the fibre $f$ and $\varepsilon( a\xi + bf, x) = a$. But the case, 
$b = \min\{a, b\}$ is not known.

Most of the research on Seshadri constants is focused on smooth surfaces $X$ ( e.g.; K3 surfaces, surfaces of general type etc.) as well as on smooth projective varieties having Picard rank 2 ( e.g.; $\mathbb{P}_C(E)$, $Gr_r(E)$ ). Also, not much is known for higher dimensional varieties with Picard rank more than 2. Motivated by this,
in this paper, we have calculated the Seshadri constants of ample line bundles on 
$ X = \mathbb{P}(E_1) \times_C \mathbb{P}(E_2)$, where $E_1$ and $E_2$ are vector bundles over a smooth irreducible curve $C$ of rank $r_1$ and $r_2$ respectively, under some assumptions on $E_1$ and $E_2$, and have given bounds in some other cases.

\section{preliminaries}
All the algebraic varieties are assumed to be irreducible and defined over the field of complex numbers, $\mathbb{C}$. 
\subsection{Definitions}
A line bundle $L$ ( a Cartier divisor $D$ ) on an irreducible smooth projective variety $X$ is said to be nef, if $L\cdot C \geq 0$ ( $D\cdot C\geq 0$ respectively ) for all irreducible curves $C\subseteq X$. The \textit{nef cone} $\Nef(X)$ is the convex cone of all nef $\mathbb{R}$-divisor classes on $X$.

Let $X$ be a smooth projective variety and $N_k(X)$ be the real vector space of $k$-cycles on $X$ modulo numerical equivalence. For each $k$, $N_k(X)$ is a finite dimensional real vector space. 
Since $X$ is smooth, we can identify $N_k(X)$ with the abstract dual $N^{n - k}(X) := N_{n - k}(X)^\vee$ via the perfect intersection pairing $N_k(X) \times N_{n - k}(X) \longrightarrow \mathbb{R}$ . 
In particular, $N_1(X)$ is the space of curves and $N^1(X)$ is the real N\'{e}ron-Severi group.

 For any $k$-dimensional subvariety $V$ of $X$, let $[V]$ be its class in $N_k(X)$. A class $\alpha \in N_k(X)$ is said to be effective if there exist subvarieties $V_1, V_2,\cdot\cdot\cdot\cdot\cdot, V_m$ and non-negative real numbers $n_1, n
 _2, ..., n_m$ such that $\alpha$ can be written as $ \alpha = \sum\limits_{i=1}^m n_i[ V_i ]$. The \it pseudo-effective cone \rm  $\overline{\Eff}_k(X) \subset N_k(X)$ is the closure of the cone generated by the classes of effective $k$-cycles in $X$. 

 Note that, $\overline{\Eff}_1(X)$ is the \textit{closed cone of curves}, which is also denoted by $\overline{\NE}(X)$ in the literature.
 
 Let, $D$ be a $\mathbb{Q}$-divisor on $X$. The \textit{stable base locus} of $D$ is
 \begin{align*}
 \mathbf{B}(D) := \bigcap_{m \in \mathbb{N}} \Bs(\mid mD \mid)_{red},
 \end{align*}

where the intersection is taken over all $m$ such that $mD$ is an integral divisor and the \textit{base locus} $\Bs(|D|)$ of a complete linear system $|D|$ of Cartier divisors on $X$ is the set of common zeros of all sections of the associated line bundle $L(D)$.

The \textit{restricted base locus} of a $\mathbb{R}$-divisor $D$ on $X$ is defined to be
\begin{align*}
\mathbf{B}_{-}(D) := \bigcup_{A} \mathbf{B}(D + A),
\end{align*}
where the union is taken over all ample divisors $A$ such that $D + A$ is a $\mathbb{Q}$-divisor.

A vector bundle $E$ of rank $2$ over $C$ is said to be $\textit{normalised}$ ( in the sense of \cite{H}) if $H^0(E) \neq 0$, but for all line bundles $L$ on $C$ with $\deg(L) < 0$, $H^0(E \otimes L) = 0$.

\subsection{Geometry of fibre product of projective bundles over a smooth curve}
Let $E_1$ and $E_2$ be two vector bundles over a smooth curve $C$ of rank $r_1$, $r_2$ and degrees $d_1$, $d_2$ respectively. Let $\mathbb{P}(E_1) = \bf Proj $ $(\oplus_{d \geq 0}Sym^d(E_1))$ and $\mathbb{P}(E_2) = \bf Proj $ $(\oplus_{d \geq 0}Sym^d(E_2))$ be the 
associated projective  bundles together with the projection morphisms $\pi_1 : \mathbb{P}(E_1) \longrightarrow C$ and  $\pi_2 : \mathbb{P}(E_2) \longrightarrow C$ respectively. Let $X = \mathbb{P}(E_1) \times_C \mathbb{P}(E_2)$ be the fibre product over
$C$. Consider the following commutative diagram:
\begin{center}
 \begin{tikzcd} 
 X = \mathbb{P}(E_1) \times_C \mathbb{P}(E_2) \arrow[r, "p_1"] \arrow[d, "p_2"]
& \mathbb{P}(E_2)\arrow[d,"\pi_2"]\\
\mathbb{P}(E_1) \arrow[r, "\pi_1" ]
& C
\end{tikzcd}
\end{center}
Let $f_1,f_2$ and $F$ denote the numerical equivalence classes of the fibres of the maps $\pi_1,\pi_2$ and $\pi_1 \circ p_2 = \pi_2 \circ p_1$ respectively. Note that $X \cong \mathbb{P}(\pi_1^*(E_2)) \cong \mathbb{P}(\pi_2^*(E_1))$.
We first fix the following notations for the numerical equivalence classes,
\vspace{2mm}
\begin{center}
$\eta_1 = \bigl[\mathcal{O}_{\mathbb{P}(E_1)}(1)\bigr] \in N^1(\mathbb{P}(E_1))$  \hspace{3mm}, \hspace{3mm} $\eta_2 = \bigl[\mathcal{O}_{\mathbb{P}(E_2)}(1)\bigr] \in N^1(\mathbb{P}(E_2)),$
\end{center}
 \vspace{2mm}
 \bigskip
 We here summarise some results that have been discussed in \cite{K-M-R} ( See Section 3 in \cite{K-M-R} for more details) :
 
 \begin{center}
  $F = p_2^\ast(f_1) = p_1^ \ast (f_2) \hspace{2.5mm} \hspace{2.5mm} , \hspace{2.5mm} F^2 = 0 \hspace{2.5mm},  \hspace{2.5mm} N^1(X) = \mathbb{R}(p_1^*\eta_2) \oplus \mathbb{R}(p_2^*\eta_1) \oplus \mathbb{R}F \hspace{2.5mm}, $
  \vskip 2mm
  
$(p_1^*\eta_2)^{r_2}\cdot F = 0\hspace{2.5mm},\hspace{2.5mm}  (p_1^*\eta_2)^{r_2 + 1} = 0 \hspace{2.5mm}, \hspace{2.5mm} (p_2^*\eta_1)^{r_1}\cdot F = 0\hspace{2.5mm}, \hspace{2.5mm} (p_2^*\eta_1)^{r_1 + 1} = 0 \hspace{2.5mm}, $ 
  \vskip 2mm

$(p_2^*\eta_1)^{r_1} = (\deg(E_1))F\cdot(p_2^*\eta_1)^{r_1-1}\hspace{2.5mm},  \hspace{2.5mm}   (p_1^*\eta_2)^{r_2} = (\deg(E_2))F\cdot(p_1^*\eta_2)^{r_2-1}\hspace{3.5mm}, $
 \vskip 2mm

$(p_2^*\eta_1)^{r_1}\cdot(p_1^*\eta_2)^{r_2-1} = \deg(E_1)\hspace{3.5mm}, \hspace{3.5mm} (p_1^*\eta_2)^{r_2}\cdot(p_2^*\eta_1)^{r_1-1} = \deg(E_2)\hspace {2mm}.$

 \end{center}

\smallskip
Also, the dual basis of $N_1(X)$ is $\{ \delta_1,\delta_2, \delta_3 \}$ where,
 \begin{center}
  
$\delta_1 = F\cdot(p_2^*\eta_1)^{r_1-2}\cdot(p_1^*\eta_2)^{r_2-1} \hspace{2.5mm},\hspace{2.5mm} \delta_2 = F\cdot(p_2^*\eta_1)^{r_1-1}\cdot(p_1^*\eta_2)^{r_2-2}, $

$\delta_3 = (p_2^*\eta_1)^{r_1-1}\cdot(p_1^*\eta_2)^{r_2-1} - \deg(E_1)F\cdot(p_2^*\eta_1)^{r_1-2}\cdot(p_1^*\eta_2)^{r_2-1} -\deg(E_2)F\cdot(p_2^*\eta_1)^{r_1-1}\cdot(p_1^*\eta_2)^{r_2-2}.$ 

\end{center}
  \section{Nef cones of fibre product}
  Let $C$ be a smooth curve over the field of complex numbers $\mathbb{C}$ and let $E$ be a vector bundle over $C$. The slope of $E$ is defined as 
\begin{center}
$\mu(E) := \frac{\deg E}{r} \in \mathbb{Q}$
\end{center}
A vector bundle $E$ over $C$ is said to be semistable is $\mu(F) \leq \mu(E)$ for all subbundle $F \subseteq E$. For every vector bundle $E$, there is a unique filtration
 \begin{align*}
  E = E_0 \supset E_{1} \supset \cdot\cdot\cdot \supset E_{l-1} \supset E_l = 0
 \end{align*} 
called the Harder-Narasimhan filtration, such that $E_i/E_{i+1}$ is semistable for each $i \in \{0,1,\cdot\cdot\cdot\cdot,l-1\}$ and $\mu(E_i/E_{i+1}) > \mu(E_{i-1}/E_{i})$ for all $i \in \{1,2,\cdot\cdot\cdot,l-1\}$.
See \cite{H-L} for more details on semistability.

Let $E_1$ and $E_2$ be two vector bundles of rank $r_1$ and $r_2$  and degree $d_1$ and $d_2$ respectively over a smooth curve $C$.

\smallskip
Let $E_1$ admits the unique Harder-Narasimhan filtration 
\begin{align*}
E_1 = E_{10} \supset E_{11} \supset ... \supset E_{1l_1} = 0
\end{align*}
with $Q_{1i} := E_{1(i-1)}/ E_{1i}$ being semistable for all $i \in [1,l_1-1]$. Denote $ n_{1i} = \rank(Q_{1i}), \\
d_{1i} = \deg(Q_{1i})$ and $\mu_{1i} = \mu(Q_{1i}) := \frac{d_{1i}}{n_{1i}}$ for all $i$.

Similarly, let $E_2$ also admits the unique Harder-Narasimhan filtration 
\begin{align*}
E_2 = E_{20} \supset E_{21} \supset ... \supset E_{2l_2} = 0
\end{align*}
with $ Q_{2i} := E_{2(i-1)} / E_{2i}$ being semistable for $i \in [1,l_2-1]$. Denote $n_{2i} = \rank(Q_{2i}), \\ 
d_{2i} = \deg(Q_{2i})$ and $\mu_{2i} = \mu(Q_{2i}) := \frac{d_{2i}}{n_{2i}}$ for all $i$.
\bigskip

\begin{thm}
Let $E_1$ and $E_2$ be two vector bundles on a smooth complex projective curve $C$ and let $X = \mathbb{P}(E_1) \times_C \mathbb{P}(E_2)$ as discussed earlier. Then, 
\begin{center}
$\Nef(\mathbb{P}(E_1) \times_C \mathbb{P}(E_2)) = \bigl \{ a\tau_1 + b\tau_2 + cF \mid a, b, c \in  \mathbb{R}_{\geq 0}\bigr\}$.
\end{center}
where $\tau_1 = (p_2^*\eta_1) - \mu_{11}F$ and $\tau_2 = (p_1^*\eta_2) - \mu_{21}F$ and $\mu_{11}$ and $\mu_{21}$ are the smallest slopes of any torsion-free quotients of $E_1$ and $E_2$ respectively, with the same notation as above.
\bigskip
\begin{proof}
By the result of \cite{Fu}, $ \Nef(\mathbb{P}(E_i)) = \bigl \{ a_i(\eta_i - \mu_{i1}f_i) + b_if_i \mid a_i, b_i \in \mathbb{R}_{\geq 0} \bigr\}$ for $i = 1, 2$. 
Since pullback of nef line bundles are nef, we get ,  $ \tau_1 = (p_2^\ast\eta_1) - \mu_{11}F$, $ \tau_2 = (p_1^\ast\eta_2) - \mu_{21}F$ and $F$ are nef.

Now, from the Harder-Narasimhan filtration of $E_i$'s ($i = 1, 2$) as described above, we get the following short exact sequences
\begin{align*}
0 \longrightarrow E_{i1} \longrightarrow E_i \longrightarrow Q_{i1} \longrightarrow 0
\end{align*}
for $i = 1, 2$.

Let  $j_i : \mathbb{P}(Q_{i1}) \longrightarrow \mathbb{P}(E_i)$ denote the canonical embeddings for $i = 1, 2$.

We now proceed along the lines of [Section 2, \cite{Fu}]. The result in [Example 3.2.17, \cite{Ful}] adjusted to bundles of quotients over curves shows that
\begin{align*}
\bigl[\mathbb{P}(Q_{11})\bigr] = \eta_1^{r_1 - n_{11}} + (d_{11} - d_1)\eta_1^{r_1 - n_{11} - 1}f_1 \in N_{n_{11}}(\mathbb{P}(E_1))
\end{align*}
and 
\begin{align*}
\bigl[\mathbb{P}(Q_{21})\bigr] = \eta_2^{r_2 - n_{21}} + (d_{21} - d_2)\eta_2^{r_2 - n_{21} - 1}f_2 \in N_{n_{21}}(\mathbb{P}(E_2))
\end{align*}
where $n_{11} = \rank(Q_{11})$, $n_{21} = \rank(Q_{21}), d_{11} = \deg(Q_{11})$ and $d_{21} = \deg(Q_{21})$.

\smallskip
As ($\eta_1 - \mu_{11}f_1$) and ($\eta_2 - \mu_{21}f_2$) are both nef divisors, we have
\vskip1mm

$\theta_{11} : = \bigl[\mathbb{P}(Q_{11})\bigr] \cdot \bigl(\eta_1 - \mu_{11}f_1\bigr)^{n_{11} - 1} $

\vskip1mm

$ = \bigl\{\eta_1^{r_1 - n_{11}} + (d_{11} - d_1)\eta_1^{r_1 - n_{11} - 1}f_1\bigr\}\cdot\bigl(\eta_1 - \mu_{11}f_1\bigr)^{n_{11} - 1} \in \overline{\Eff}_1(\mathbb{P}(E_1))$

\vskip2mm
and 
\vskip 1mm

$\theta_{21} := \bigl[\mathbb{P}(Q_{21})\bigr] \cdot \bigl(\eta_2 - \mu_{21}f_2\bigr)^{n_{21} - 1}$
\vskip1mm
$= \bigl\{\eta_2^{r_2 - n_{21}} + (d_{21} - d_2)\eta_2^{r_2 - n_{21} - 1}f_2\bigr\}\cdot\bigl(\eta_2 - \mu_{21}f_2\bigr)^{n_{21} - 1} \in \overline{\Eff}_1(\mathbb{P}(E_2))$.
\vskip2mm

Note that, $p_1$ and $p_2$ are proper, flat morphisms, and as the base space is smooth, $p_1$, $p_2$ are also smooth. Hence, numerical pullbacks of cycles are well defined and the flatness of $p_1$ and $p_2$
ensure that pullbacks of numerical classes preserve the pseudo-effectivity. 
We  consider 
$ D : = p_2^\ast(\theta_{11}) \cdot p_1^\ast(\theta_{21})$, which is equal to
\begin{align*}
p_2^\ast\bigl[\mathbb{P}(Q_{11})\bigr] \cdot p_1^\ast\bigl[\mathbb{P}(Q_{21})\bigr] \cdot \bigl\{(p_2^*\eta_1) - \mu_{11}F\bigr\}^{n_{11} - 1} \cdot \bigl\{(p_1^*\eta_2) - \mu_{21}F\bigr\}^{n_{21} - 1}
\end{align*}

 By using the above descriptions of $\theta_{11}$ and $\theta_{21}$, $D$ can be written as
\smallskip

$D= \bigl\{ (p_2^\ast\eta_1)^{r_1 - n_{11}} + (d_{11} - d_1)F \cdot (p_2^\ast\eta_1)^{r_1 - n_{11} - 1} \bigr \} \cdot \bigl \{ (p_1^\ast\eta_2)^{r_2 - n_{21}} + (d_{21} - d_2)F \cdot (p_1^\ast\eta_2)^{r_2 - n_{21} - 1} \bigr \} \cdot$ \\ 
$\bigl\{(p_2^*\eta_1) - \mu_{11}F\bigr\}^{n_{11} - 1} \cdot \bigl\{(p_1^*\eta_2) - \mu_{21}F\bigr\}^{n_{21} - 1}$
\smallskip

$= \bigl \{ (p_2^\ast\eta_1)^{r_1 - 1} + (\mu_{11} - d_1)F  \cdot (p_2^\ast\eta_1)^{r_1 - 2} \bigr \} \cdot \bigl \{ (p_1^\ast \eta_2)^{r_2 - 1} + (\mu_{21} - d_2)F \cdot (p_1^\ast \eta_2)^{r_2 - 2} \bigr \}$
\smallskip

 $= (p_2^*\eta_1)^{r_1 -1} \cdot (p_1^*\eta_2)^{r_2 -1} + (\mu_{11} - d_1)F \cdot (p_2^*\eta_1)^{r_1 - 2} \cdot (p_1^*\eta_2)^{r_2 - 1} + (\mu_{21} - d_2)F \cdot (p_1^*\eta_2)^{r_2 - 2} \cdot (p_2^*\eta_1)^{r_1 - 1}$
 \smallskip
 
  which is clearly a 1-cycle in $X$. Now, $p_2^\ast\bigl[\mathbb{P}(Q_{11})\bigr] \cdot p_1^\ast\bigl[\mathbb{P}(Q_{21})\bigr] = \bigl[\mathbb{P}(Q_{11}) \times_C \mathbb{P}(Q_{21})\bigr]$ is an effective cycle in $X$ , and
  $ (p_2^*\eta_1) - \mu_{11}F , (p_1^*\eta_2) - \mu_{21}F$ are nef divisors in $X$.
  Hence, $D \in \overline{\Eff}_1(X)$.
  \vskip 2mm
  
  Since $ \tau_1\cdot D = \bigl\{(p_2^*\eta_1) - \mu_{11}F\bigr\}\cdot D = 0 , \tau_2\cdot D = \bigl\{(p_1^*\eta_2) - \mu_{21}F\bigr\}\cdot D = 0 $ and $F^2=0$ , $\tau_1 , \tau_2 , F$ are in the boundary of $\Nef(X)$.
  
  If $ a\tau_1 + b\tau_2 +cF$ is any element in $\Nef(X)$, then $(a\tau_1 + b\tau_2 +cF) \cdot D \geq 0$ , which implies that $c \geq 0$. Also, $F \cdot \tau_1^{r_1 - 2} \cdot \tau_2^{r_2 - 1}$ and 
  $F \cdot \tau_1^{r_1 - 1} \cdot\tau_2^{r_2 - 2}$ are intersections of nef divisors. Now
  \begin{align*}
  (a\tau_1 + b\tau_2 +cF)\cdot (F \cdot \tau_1^{r_1 - 2} \cdot \tau_2^{r_2 - 1})& = a F \cdot \tau_1^{r_1 - 1} \cdot \tau_2^{r_2 - 1} + b F \cdot \tau_1^{r_1 - 2} \cdot \tau_2^{r_2} + cF^2 \cdot \tau_1^{r_1 - 2}\cdot \tau_2^{r_2 - 1} \\
  & = aF \cdot (p_2^*\eta_1)^{r_1 -1} \cdot  (p_1^*\eta_2)^{r_2 -1} + bF \cdot(p_2^*\eta_1)^{r_1 - 2} \cdot (p_1^*\eta_2)^{r_2} + 0 \\
 & = a + 0 + 0 \\
 &= a
  \end{align*} 
  and
  \begin{align*}
   (a\tau_1 + b\tau_2 +cF)\cdot (F \cdot \tau_1^{r_1 - 1} \cdot \tau_2^{r_2 - 2})& = a F \cdot \tau_1^{r_1} \cdot \tau_2^{r_2 - 2} + b F \cdot \tau_1^{r_1 - 1} \cdot \tau_2^{r_2 - 1} + cF^2 \cdot \tau_1^{r_1 - 1}\cdot \tau_2^{r_2 - 2} \\
 & = b + 0 + 0 \\
 &= b
 \end{align*}
   Since, $ a\tau_1 + b\tau_2 +cF \in \Nef(X)$, we have $a \geq 0 , b \geq 0$. This completes the proof.
 \end{proof}
\end{thm}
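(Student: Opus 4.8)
The plan is to use that $N^1(X)$ is three–dimensional with basis $\{p_2^*\eta_1,\,p_1^*\eta_2,\,F\}$, so $\Nef(X)$ is a full–dimensional cone in a three–dimensional space and it suffices to prove the two inclusions separately, writing a prospective nef class as $a\tau_1+b\tau_2+cF$. For the inclusion $\{a\tau_1+b\tau_2+cF : a,b,c\geq 0\}\subseteq\Nef(X)$ I would invoke the description $\Nef(\mathbb{P}(E_i))=\{a_i(\eta_i-\mu_{i1}f_i)+b_if_i\}$ from \cite{Fu} together with the fact that pullback of a nef class along the smooth (in particular proper, flat) maps $p_1,p_2$ is again nef. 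Since $\tau_1=p_2^*(\eta_1-\mu_{11}f_1)$, $\tau_2=p_1^*(\eta_2-\mu_{21}f_2)$ and $F=p_2^*f_1=p_1^*f_2$, all three generators are nef, hence so is every nonnegative combination.

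For the reverse inclusion I would produce, dually, three pseudo–effective $1$–cycle classes that read off the three coordinates. Two of them are immediate: $F\cdot\tau_1^{r_1-2}\cdot\tau_2^{r_2-1}$ and $F\cdot\tau_1^{r_1-1}\cdot\tau_2^{r_2-2}$ are intersections of nef divisors, hence pseudo–effective, and using the recorded relations $F^2=0$, $(p_1^*\eta_2)^{r_2}=d_2\,F\cdot(p_1^*\eta_2)^{r_2-1}$, $(p_2^*\eta_1)^{r_1}=d_1\,F\cdot(p_2^*\eta_1)^{r_1-1}$ one checks that pairing $a\tau_1+b\tau_2+cF$ against them returns exactly $a$ and $b$ respectively. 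The third, and the crux, is a class $D$ annihilating both $\tau_1$ and $\tau_2$ while pairing nontrivially with $F$, so that $(a\tau_1+b\tau_2+cF)\cdot D$ isolates $c$.

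To build $D$ I would work over $C$ with the minimal–slope quotients of the two Harder–Narasimhan filtrations. From $0\to E_{i1}\to E_i\to Q_{i1}\to 0$ one gets closed embeddings $\mathbb{P}(Q_{i1})\hookrightarrow\mathbb{P}(E_i)$, whose cycle classes are computed (à la \cite{Ful}, Example 3.2.17, adapted to quotient bundles over a curve) as $[\mathbb{P}(Q_{i1})]=\eta_i^{r_i-n_{i1}}+(d_{i1}-d_i)\eta_i^{r_i-n_{i1}-1}f_i$. Intersecting with the appropriate power of the boundary nef class $\eta_i-\mu_{i1}f_i$ produces effective $1$–cycles $\theta_{i1}$ on $\mathbb{P}(E_i)$, and I set $D=p_2^*\theta_{11}\cdot p_1^*\theta_{21}$, which is the class of the effective cycle $\mathbb{P}(Q_{11})\times_C\mathbb{P}(Q_{21})$ cut by nef divisors; it is pseudo–effective since flatness of $p_1,p_2$ makes numerical pullback preserve pseudo–effectivity. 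The vanishing $\tau_1\cdot D=\tau_2\cdot D=0$ is where semistability enters: on $\mathbb{P}(Q_{i1})$ the quotient $Q_{i1}$ is semistable of slope $\mu_{i1}$, so $(\eta_i-\mu_{i1}f_i)^{n_{i1}}=d_{i1}-n_{i1}\mu_{i1}=0$ in top degree, which forces the extra factor of $\tau_i$ to kill $D$. A short computation with the same intersection relations gives $F\cdot D=1$, whence $(a\tau_1+b\tau_2+cF)\cdot D=c$; nefness forces $c\geq 0$, and together with $a,b\geq 0$ this places $a\tau_1+b\tau_2+cF$ in the claimed cone.

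The main obstacle is the third step: correctly identifying the cycle class $[\mathbb{P}(Q_{i1})]$ and verifying that the resulting $D$ is genuinely pseudo–effective (not merely nef-dual numerically), together with the semistability computation that forces $\tau_i\cdot D=0$ through vanishing of the top self–intersection of the boundary nef class. Everything else — nefness of the generators and the two coordinate extractions for $a$ and $b$ — should follow routinely from \cite{Fu} and the recorded intersection numbers.
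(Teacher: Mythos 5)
Your proposal is correct and takes essentially the same approach as the paper's proof: nefness of $\tau_1,\tau_2,F$ by pulling back Fulger's description of $\Nef(\mathbb{P}(E_i))$, extraction of $a$ and $b$ by pairing against the nef products $F\cdot\tau_1^{r_1-2}\cdot\tau_2^{r_2-1}$ and $F\cdot\tau_1^{r_1-1}\cdot\tau_2^{r_2-2}$, and the same pseudo-effective $1$-cycle $D=p_2^*\theta_{11}\cdot p_1^*\theta_{21}$ coming from $\mathbb{P}(Q_{11})\times_C\mathbb{P}(Q_{21})$ cut by the boundary nef classes, which kills $\tau_1,\tau_2$ and isolates $c$. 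The only (cosmetic) difference is that you justify $\tau_i\cdot D=0$ via the vanishing of the top self-intersection $(\eta_i-\mu_{i1}f_i)^{n_{i1}}$ on $\mathbb{P}(Q_{i1})$ --- which in fact uses only $\mu_{i1}=d_{i1}/n_{i1}$, not semistability (semistability is what makes $\eta_i-\mu_{i1}f_i$ nef via \cite{Fu}) --- whereas the paper verifies the same vanishing by direct computation with the listed intersection relations.
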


\begin{corl}
Assume that the hypotheses of Theorem 3.1 holds. Then, the closed cone of curves of $X$ is given by 
\begin{center}
$\overline{\NE}(X) = \bigl\{ p\delta_1 + q\delta_2 + r( \delta_3 + \mu_{11}\delta_1 + \mu_{21}\delta_2) \mid p, q, r  \in \mathbb{R}_{\geq 0}\bigr\}$.
\end{center}
\end{corl}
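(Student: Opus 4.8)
The plan is to obtain $\overline{\NE}(X)$ as the dual cone of $\Nef(X)$ under the intersection pairing $N_1(X)\times N^1(X)\to\mathbb{R}$, using the description of the nef cone furnished by Theorem 3.1. By Kleiman's duality theorem the closed cone of curves $\overline{\NE}(X)$ and the nef cone $\Nef(X)$ are mutually dual closed convex cones, so since Theorem 3.1 identifies $\Nef(X)$ with the cone generated by $\tau_1$, $\tau_2$ and $F$, it suffices to describe $\{\gamma\in N_1(X)\mid \gamma\cdot\tau_1\ge 0,\ \gamma\cdot\tau_2\ge 0,\ \gamma\cdot F\ge 0\}$. Writing $\tau_1,\tau_2,F$ in the basis $\{p_2^*\eta_1, p_1^*\eta_2, F\}$ one sees immediately that these three classes are linearly independent (the change-of-basis matrix is unipotent), so $\Nef(X)$ is a full-dimensional simplicial cone, and hence its dual is again simplicial, generated by the basis of $N_1(X)$ dual to $\{\tau_1,\tau_2,F\}$.

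To make this explicit I would use the dual basis $\{\delta_1,\delta_2,\delta_3\}$ recorded in Section 2, which satisfies $\delta_1\cdot(p_2^*\eta_1)=\delta_2\cdot(p_1^*\eta_2)=\delta_3\cdot F=1$ and vanishes on the remaining pairings. For a general class $\gamma=p\delta_1+q\delta_2+r\delta_3$, substituting $\tau_1=p_2^*\eta_1-\mu_{11}F$ and $\tau_2=p_1^*\eta_2-\mu_{21}F$ gives $\gamma\cdot\tau_1=p-\mu_{11}r$, $\gamma\cdot\tau_2=q-\mu_{21}r$ and $\gamma\cdot F=r$. Hence the three defining inequalities of the dual cone read $p\ge\mu_{11}r$, $q\ge\mu_{21}r$ and $r\ge 0$.

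Finally I would reparametrize this region to read off the stated generators: setting $r=r'$, $p=p'+\mu_{11}r'$ and $q=q'+\mu_{21}r'$ turns the inequalities $p\ge\mu_{11}r$, $q\ge\mu_{21}r$, $r\ge 0$ into $p',q',r'\ge 0$, while a direct expansion gives $p\delta_1+q\delta_2+r\delta_3=p'\delta_1+q'\delta_2+r'(\delta_3+\mu_{11}\delta_1+\mu_{21}\delta_2)$. This exhibits $\overline{\NE}(X)$ as the cone generated by $\delta_1$, $\delta_2$ and $\delta_3+\mu_{11}\delta_1+\mu_{21}\delta_2$, as claimed. There is no genuinely hard step here: everything reduces to dualizing a simplicial cone, and the only points requiring care are the invocation of Kleiman duality (legitimate since $X$ is smooth projective) and the verification that $\{\delta_1,\delta_2,\delta_3\}$ pairs with $\{p_2^*\eta_1,p_1^*\eta_2,F\}$ as a dual basis, which follows at once from the intersection relations recorded in Section 2 (in particular $F^2=0$ together with $(p_2^*\eta_1)^{r_1}=\deg(E_1)\,F\cdot(p_2^*\eta_1)^{r_1-1}$ and its analogue for $E_2$).
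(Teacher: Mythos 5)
Your proposal is correct and is essentially the paper's own (implicit) argument: the authors state Corollary 3.2 without proof precisely because it follows from Theorem 3.1 by dualizing the simplicial cone $\Nef(X)=\langle\tau_1,\tau_2,F\rangle$ against the basis $\{\delta_1,\delta_2,\delta_3\}$ dual to $\{p_2^*\eta_1,p_1^*\eta_2,F\}$, which is exactly the computation you carry out. Your pairing values $\gamma\cdot\tau_1=p-\mu_{11}r$, $\gamma\cdot\tau_2=q-\mu_{21}r$, $\gamma\cdot F=r$ and the resulting generators $\delta_1$, $\delta_2$, $\delta_3+\mu_{11}\delta_1+\mu_{21}\delta_2$ all check out against the intersection relations recorded in Section 2.
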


\begin{xrem}
 If $E_1$ and $E_2$ both are semistable bundles in Theorem 3.1 , then for each $ i \in\{1,2\}$, $\mathbb{P}(Q_{i1}) \subset \mathbb{P}(E_i)$  becomes an equality and by putting $\mu_1$ and $\mu_2 , (\mu_i = \mu(E_i)$, $i = 1, 2 )$ in place of 
 $\mu_{11}$ and $\mu_{21}$ in the description above, we recover an earlier result in \cite{K-M-R} $( $ see Theorem 4.1 in \cite{K-M-R}$ )$. Similar alterations can be made if one of the vector bundles is semistable and the other is unstable.
\end{xrem}

\section{Seshadri Constants}
In this section, we will compute the Seshadri constants of ample line bundles on $X = \mathbb{P}(E_1) \times_C \mathbb{P}(E_2)$ in certain cases and will give bounds in some other cases. See the introduction for the definition of Seshadri constant.
 
 \begin{thm}
 Let $E_1$ and $E_2$ be two vector bundles on a smooth curve $C$ with $\mu_{11}$ and $\mu_{21}$ being the smallest slopes of any torsion-free quotient of $E_1$ and $E_2$  respectively and let $X = \mathbb{P}(E_1) \times_C \mathbb{P}(E_2)$. 
Let $L$  be an ample line bundle on $X$ which is numerically equivalent to $a\tau_1 + b\tau_2 + cF \in N^1(X)$. Then, the Seshadri constants of $L$ satisfy,
\begin{align*}
 \varepsilon(X, L, x) \geq \min \{a, b, c \} ,  \hskip 2mm \forall x \in X. 
\end{align*}
Moreover, 

 (4.1.1) if $a = \min\{a,b,c\},$ then $\varepsilon(X, L, x) = a, \hskip 2mm \forall x \in X$

 (4.1.2) if $b = \min\{a,b,c\},$ then $ \varepsilon(X, L, x) = b, \hskip 2mm \forall x \in X$.

\end{thm}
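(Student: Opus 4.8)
The plan is to prove the lower bound $\varepsilon(X,L,x)\ge\min\{a,b,c\}$ first, and then to deduce the two equality statements by exhibiting explicit curves through $x$ that compute the constant. Set $m=\min\{a,b,c\}$. Since $\tau_1,\tau_2,F$ are nef (as established in the proof of Theorem 3.1) and $a,b,c\ge m\ge 0$, for every irreducible curve $\Gamma\subset X$ through $x$ the intersection numbers $\tau_1\cdot\Gamma,\ \tau_2\cdot\Gamma,\ F\cdot\Gamma$ are all nonnegative, so $L\cdot\Gamma=a(\tau_1\cdot\Gamma)+b(\tau_2\cdot\Gamma)+c(F\cdot\Gamma)\ge m\bigl((\tau_1+\tau_2+F)\cdot\Gamma\bigr)$. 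Hence it is enough to prove the single inequality $(\tau_1+\tau_2+F)\cdot\Gamma\ge\mult_x\Gamma$ for all such $\Gamma$; dividing by $\mult_x\Gamma$ and taking the infimum then gives $\varepsilon(X,L,x)\ge m$.

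To establish this core inequality I would split according to the behaviour of $\Gamma$ under the structure map $\phi=\pi_1\circ p_2=\pi_2\circ p_1\colon X\to C$. Let $t=\phi(x)$ and let $Y_t=\phi^{-1}(t)\cong\mathbb{P}^{r_1-1}\times\mathbb{P}^{r_2-1}$ be the fibre through $x$, noting $[Y_t]=F$. If $\phi(\Gamma)$ is a point then $\Gamma\subset Y_t$, so $F\cdot\Gamma=0$ and $\tau_1\cdot\Gamma=(p_2^*\eta_1)\cdot\Gamma$, $\tau_2\cdot\Gamma=(p_1^*\eta_2)\cdot\Gamma$. Since $(p_2^*\eta_1+p_1^*\eta_2)|_{Y_t}=\mathcal{O}(1,1)$ is very ample on $\mathbb{P}^{r_1-1}\times\mathbb{P}^{r_2-1}$ (via the Segre embedding), $(\tau_1+\tau_2+F)\cdot\Gamma$ equals the degree of the image of $\Gamma$ under that embedding, which is at least $\mult_x\Gamma$. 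If instead $\phi(\Gamma)=C$, then $\Gamma\not\subset Y_t$, and because $Y_t$ is a smooth Cartier divisor the local intersection multiplicity at $x$ satisfies $(\Gamma\cdot Y_t)_x\ge\mult_x\Gamma$; summing over $\Gamma\cap Y_t$ gives $F\cdot\Gamma=Y_t\cdot\Gamma\ge\mult_x\Gamma$, whence $(\tau_1+\tau_2+F)\cdot\Gamma\ge F\cdot\Gamma\ge\mult_x\Gamma$ using $\tau_i\cdot\Gamma\ge 0$. These two cases are exhaustive, so the lower bound follows.

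For the equality statements I would produce optimal curves inside $Y_t$. Writing $x=(x_1,x_2)\in\mathbb{P}^{r_1-1}\times\mathbb{P}^{r_2-1}$, take the line $\ell=\ell_1\times\{x_2\}$ where $\ell_1$ is a line through $x_1$ in the first factor. Then $\mult_x\ell=1$, $F\cdot\ell=0$, $(p_2^*\eta_1)\cdot\ell=1$ and $(p_1^*\eta_2)\cdot\ell=0$, so $\tau_1\cdot\ell=1$, $\tau_2\cdot\ell=0$, and $L\cdot\ell=a$. This gives $\varepsilon(X,L,x)\le a$, which combined with the lower bound yields $\varepsilon(X,L,x)=a$ when $a=\min\{a,b,c\}$, proving (4.1.1). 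Symmetrically, the line $\{x_1\}\times\ell_2$ through $x$ gives $L\cdot\ell'=b$ with $\mult_x\ell'=1$, proving (4.1.2).

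The routine ingredients here are the nef decomposition and the very-ampleness computation on the fibre. The step carrying the real content is the dominant case: the point is to recognise that the fibre class $F$ by itself already dominates the multiplicity, i.e.\ that $F\cdot\Gamma\ge\mult_x\Gamma$ for any curve meeting $x$ and not contained in $Y_t$, which rests on the estimate $(\Gamma\cdot Y_t)_x\ge\mult_x\Gamma$ for the smooth divisor $Y_t$. I expect the main obstacle to be formulating this local intersection-multiplicity estimate cleanly and verifying that the case division by $\phi$ is genuinely exhaustive; once these are in place, the equality cases drop out immediately from the explicit fibre lines.
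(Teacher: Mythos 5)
Your proof is correct and follows essentially the same route as the paper's: the same dichotomy on whether the curve lies in a fibre of $\pi_1\circ p_2$, the same B\'{e}zout-type bound $F\cdot\Gamma\ge\mult_x\Gamma$ in the dominant case, the same degree-versus-multiplicity argument on the fibre $\mathbb{P}^{r_1-1}\times\mathbb{P}^{r_2-1}$ (which the paper isolates as Lemma 4.2), and the same fibre lines through $x$ for the upper bounds. The only difference is one of bookkeeping: you work directly with nefness of $\tau_1,\tau_2,F$ and reduce everything to the single inequality $(\tau_1+\tau_2+F)\cdot\Gamma\ge\mult_x\Gamma$, whereas the paper expands $\Gamma$ in the coordinates of $\overline{\NE}(X)$ furnished by Corollary 3.2; the underlying geometric estimates are identical.
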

\bigskip
Before going into the proof of the Theorem 4.1, we will prove the following useful lemma.
\begin{lem}
Let $L$ be an $\mathbb{R}$-divisor of type $(a, b)$ on $\mathbb{P}^n \times \mathbb{P}^m$, with $a, b \in \mathbb{R}_{\geq 0}$. Then,
\begin{align*}
\varepsilon( \mathbb{P}^n \times \mathbb{P}^m , L, p) = \min\{ a, b \} \hskip 2mm \forall p \in \mathbb{P}^n \times \mathbb{P}^m
\end{align*}
\end{lem}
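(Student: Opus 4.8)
The plan is to compute $\varepsilon(\mathbb{P}^n \times \mathbb{P}^m, L, p)$ by first reducing to a clean combinatorial picture and then exhibiting curves that saturate the bound. Since the Seshadri constant is invariant under the transitive automorphism action of $\mathbb{P}^n \times \mathbb{P}^m$, it suffices to work at a single point $p$, and moreover the value will be independent of $p$ (so I may drop $p$ from the notation). Write $H_1, H_2$ for the two hyperplane class pullbacks, so $L \equiv aH_1 + bH_2$; the intersection numbers are $H_1^{n+1}=H_2^{m+1}=0$ and $H_1^n H_2^m = 1$. For the \emph{upper bound} $\varepsilon \leq \min\{a,b\}$, the key step is to test $L$ against the right curves: a line $\ell_1 \subset \mathbb{P}^n \times \{*\}$ through $p$ gives $L \cdot \ell_1 = a$ with $\mult_p \ell_1 = 1$, so $\varepsilon \leq a$; symmetrically a line $\ell_2 \subset \{*\} \times \mathbb{P}^m$ gives $\varepsilon \leq b$. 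Hence $\varepsilon \leq \min\{a,b\}$.

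The substance is the \emph{lower bound} $\varepsilon \geq \min\{a,b\}$. I would prove this by showing that for every irreducible curve $C$ through $p$,
\begin{align*}
L \cdot C \geq \min\{a,b\} \cdot \mult_p C.
\end{align*}
The natural device is to blow up: let $\pi : Y \to \mathbb{P}^n \times \mathbb{P}^m$ be the blow-up at $p$ with exceptional divisor $E$, and recall that $\varepsilon \geq t$ is equivalent to the nefness of $\pi^*L - tE$. So the cleanest formulation is to verify that $\pi^*L - \min\{a,b\}\,E$ is nef on $Y$. Rather than analyze $Y$ directly, I would argue projection by projection: the two projections $q_i : \mathbb{P}^n \times \mathbb{P}^m \to \mathbb{P}^{n}, \mathbb{P}^m$ each restrict to a curve $C$ of multiplicity $\mult_p C$ at $p$, and the image has multiplicity at least $\mult_p C$ at $q_i(p)$ \emph{unless} $C$ is contracted by $q_i$. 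Using that $\mathcal{O}_{\mathbb{P}^n}(1)$ has Seshadri constant $1$ at every point (a line computes it), one gets $H_i \cdot C \geq \mult_p C$ whenever $C$ is not contracted by $q_i$, and $H_i \cdot C = 0$ when it is contracted. Combining these two inequalities linearly with weights $a$ and $b$ yields $L \cdot C = aH_1\cdot C + bH_2 \cdot C \geq \min\{a,b\}\,\mult_p C$, with the contracted cases handled by noting a curve cannot be contracted by both $q_1$ and $q_2$.

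The main obstacle I expect is the contracted-curve bookkeeping: when $C$ lies in a fibre of one projection, one of the two terms $H_i \cdot C$ vanishes, and I must be sure the surviving term still dominates $\min\{a,b\}\,\mult_p C$. This works because if $C$ is contracted by $q_1$ then $C$ lives in a single $\{*\}\times\mathbb{P}^m$, so $H_2\cdot C \geq \mult_p C$ and $L\cdot C = b\,(H_2\cdot C)\geq b\,\mult_p C \geq \min\{a,b\}\,\mult_p C$, and symmetrically for $q_2$; the truly mixed curves get the bound from summing. A secondary technical point is justifying that $H_i \cdot C \geq \mult_p C$ for non-contracted curves, i.e.\ that pushing forward to $\mathbb{P}^n$ does not decrease the multiplicity at the image point and that a line computes the Seshadri constant of $\mathcal{O}(1)$ on projective space; both are standard, the latter following from the fact that any curve meets a general hyperplane in $\deg C \geq \mult_{q_i(p)} (q_i)_*C$ points. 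Once these multiplicity inequalities are in place, the linear combination closes the argument and matches the upper bound, giving equality $\varepsilon = \min\{a,b\}$ at every point.
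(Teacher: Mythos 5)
Your proof is correct, and the upper bound is exactly the paper's: the lines $\ell_1 \subset \mathbb{P}^n \times \{*\}$ and $\ell_2 \subset \{*\} \times \mathbb{P}^m$ through $p$ give $\varepsilon \leq a$ and $\varepsilon \leq b$. Where you genuinely diverge is the lower bound, though both arguments are B\'ezout-type. The paper writes the class of an irreducible curve $B \ni p$ as a bidegree $(x,y)$ and uses one global inequality: $x+y = (H_1+H_2)\cdot B$ is the degree of $B$ under the Segre embedding, hence $\geq \mult_p B$, so $L\cdot B \geq \min\{a,b\}(x+y) \geq \min\{a,b\}\,\mult_p B$ in a single line. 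You instead prove the per-factor inequalities $H_i\cdot C \geq \mult_p C$ for curves not contracted by $q_i$, and dispose of fibral curves by noting no curve is contracted by both projections. Your route proves slightly more (each surviving factor degree alone dominates the multiplicity), but it costs you the contraction bookkeeping and the claim that multiplicity does not drop under pushforward; that claim is true but fussier than needed, and the cleaner justification --- which you also sketch --- is $H_i\cdot C = q_i^*H\cdot C \geq i_p(q_i^*H,C) \geq \mult_p C$ for a general hyperplane $H$ through $q_i(p)$, which is precisely the paper's move applied to $\mathcal{O}(1,0)$ and $\mathcal{O}(0,1)$ separately instead of once to $\mathcal{O}(1,1)$. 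Two minor points: the opening reduction via homogeneity is harmless but unnecessary, since both arguments are uniform in $p$; and the blow-up reformulation ($\varepsilon \geq t$ iff $\pi^*L - tE$ is nef) that you announce is never actually used and should be deleted.
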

\begin{proof}
Let $B$ be an irreducible curve in $\mathbb{P}^n \times \mathbb{P}^m$. Then, $B$ can be written as $B = x(1, 0)^{n - 1} + y(0, 1)^{m - 1}$ for some $x, y \in \mathbb{R}_{\geq 0}$.
Also, for any $p \in \mathbb{P}^n \times \mathbb{P}^m$, we have  $ \deg B \geq \mult_pB$. Hence,
\begin{align*}
\frac{L \cdot B}{\mult_p B} = \frac{ay + bx}{\mult_p B} \geq \min \{ a, b \} \cdot \frac{y + x}{\mult_p B} \geq \min\{ a, b \}
\end{align*}
\smallskip
Now, for any point $p \in \mathbb{P}^n \times \mathbb{P}^m$, write $ p = (p_1, p_2)$, with $p_1 \in \mathbb{P}^n$ and $ p_2 \in \mathbb{P}^m$. Then, $p \in p_1 \times l_2$ and $p \in l_1 \times p_2$, 
where $l_1$ and $l_2$ are classes of lines in $\mathbb{P}^n$ and $\mathbb{P}^m$ respectively. This gives us 
\begin{align*}
\varepsilon(\mathbb{P}^n \times \mathbb{P}^m, L , p) \leq \frac{ L \cdot (p_1 \times l_2)}{1} = a \quad and \quad \varepsilon(\mathbb{P}^n \times \mathbb{P}^m, L , p) \leq \frac{L \cdot (l_1 \times p_2)}{1} = b
\end{align*}
 which implies $\varepsilon(\mathbb{P}^n \times \mathbb{P}^m, L , p) \leq \min\{ a, b \}$. This proves the lemma.
\end{proof}

\vskip 2mm

\begin{proof}
\it of Theorem 4.1 \rm :
By Theorem 3.1 and Corollary 3.2, 
\begin{align*}
\Nef(X) = \bigl\{ a\tau_1 + b\tau_2 + cF \mid a,b,c \in \mathbb{R}_{\geq 0} \bigr\}
\end{align*}
and 
\begin{align*}
 \overline{\NE}(X) = \bigl\{ p\delta_1 + q\delta_2 + r\overline{\delta_3} \in N_1(X) \mid p,q,r\in \mathbb{R}_{\geq 0} \bigr\},
\end{align*}
 where $\overline{\delta_3} = \delta_3 + \mu_{11}\delta_1 + \mu_{21}\delta_2$.

 Let $B$ be a reduced and irreducible curve passing through $x \in X$ with multiplicity $m$ at $x \in X$. Then $B$ can be written as $B =  p\delta_1 + q\delta_2 +r\overline{\delta_3} \in \overline{\NE}(X) \subseteq N_1(X)$ . Two cases can occur :
 \subsection*{Case I}
 Assume that $B$ is not contained in any fibre of the map $(\pi_1\circ p_2)$ over the curve $C$. Hence, by B\'{e}zout's Theorem :
\begin{align}\label{seq1}
  F\cdot B \geq \mult_xB = m
\end{align}
This implies, $r\geq m$. Since $L$ is ample, $a,b,c > 0$.
Hence,
\begin{align*}
 \frac{L\cdot B}{\mult_xB} = \frac{L\cdot B}{m} = \frac{(a\tau_1 + b\tau_2 + cF)\cdot(p\delta_1 + q\delta_2 +r\overline{\delta_3})}{m}
\end{align*}

\begin{align*} 
= \frac{ap + bq}{m} + c\cdot\frac{r}{m} \geq c\cdot\frac{r}{m} \geq c.
 \end{align*}

\subsection*{Case II}
Assume that $B$ is contained in some fibre $F$ of the map $(\pi_1\circ p_2)$ over the curve $C$. Hence, $F\cdot B = 0$ which implies $r=0$. 
We know that the fibres of the map $(\pi_1\circ p_2)$ are isomorphic to $\mathbb{P}^{r_1-1} \times \mathbb{P}^{r_2-1}$. Since $B$ is curve in $\mathbb{P}^{r_1-1} \times \mathbb{P}^{r_2-1}$ passing through $x$ of 
multiplicity $m$, then  from Lemma 4.2. , $\frac{L\cdot B}{\mult_xB} \geq \min\{a,b\}$.

Combining both cases, we have, $\varepsilon(X,L,x) := \inf\limits_{x \in C} \hspace{1mm} \{\frac{L\cdot C}{\mult_xC}\} \geq \min\{a,b,c\}$ , $\forall x\in X$.
\bigskip

\hspace*{5mm} Now, a point $x \in X$ can be written as $ x = (x_1, x_2 )$, where $x_1 \in \mathbb{P}(E_1), x_2 \in \mathbb{P}(E_2)$. Take the class of a line $l_2$ in the fibre $f_2$ of $\pi_2$  passing through $x_2$. 
 Then, \\
 $x \in x_1 \times l_2 = \delta_1 \{  = F \cdot (p_2^*\eta_1)^{r_1 - 2} \cdot (p_1^*\eta_2)^{r_2 - 1}\}$ in $N_1(X)$.
 So,
 \begin{align*}
 \varepsilon(X, L, x) \leq \frac{L \cdot \delta_1}{1} = a. 
 \end{align*}
 
 When $ a = \min\{a, b, c\}$, using the above inequality and the fact that $\varepsilon(X,L,x) \geq \min \{a, b, c \}$, we conclude that $\varepsilon(X, L, x) = a$.
 
Similarly, take the class of a line $l_1$ in the fibre $f_1$ of $\pi_1$ passing through $x_1$. Then, \\
$x \in l_1 \times x_2$ = $\delta_2\{ = F \cdot (p_2^*\eta_1)^{r_1 - 1} \cdot (p_1^*\eta_2)^{r_2 - 2}\}$ in $N_1(X)$. So, 
\begin{align*}
\varepsilon(X, L, x) \leq \frac{L \cdot \delta_2}{1} = b.
\end{align*}
 
 So, if $b = \min\{a, b, c\}$, the above inequality and $\varepsilon(X,L,x) \geq \min \{a, b, c\}$ implies that $\varepsilon(X, L, x) = b$. This proves $(4.1.1)$ and $(4.1.2)$.
\end{proof}

In the above theorem if $c= \min\{a, b, c \}$ more can be said about the Seshadri constants of ample line bundles on $\mathbb{P}(E_1) \times \mathbb{P}(E_2)$. These results are explained in the following two theorems.

\begin{thm}
Let $E_1$ and $E_2$ be two unstable vector bundles over a smooth curve $C$ of rank $r_1$ and $r_2$ respectively and $ X = \mathbb{P}(E_1) \times_C \mathbb{P}(E_2)$.
 Let $L$ be an ample line bundle on $X$ numerically equivalent to $ a\tau_1 + b\tau_2 + cF \in N^1(X)$.When $c =\min\{a, b, c\}$ the Seshadri constants of $L$ have the following properties.

 \hspace{5mm} (\romannumeral 1) \, Assume $c \leq a \leq b$, $\rank(E_1) = 2$ and $E_1$ is normalised.
   
  \hspace*{14mm}     If $x$ is a point outside $\mathbf{B}_{-} ( p_2^*\eta_1) $, then $\varepsilon(X,L, x) = a$.
   
 \hspace*{14mm}      If $x$ belongs to $\mathbf{B}_{-}( p_2^*\eta_1)$, then $ c\leq \varepsilon(X, L, x) \leq a$.
 
\hspace*{5mm}  (\romannumeral 2) \,  Assume   $c \leq b \leq a$, $\rank(E_2) = 2$ and $E_2$ is normalised.
   
\hspace*{14mm}    If $x$ is a point outside $\mathbf{B}_{-} (p_1^*\eta_2)$,   then $\varepsilon(X,L, x) = b$.
   
\hspace*{14mm}       If $x$ belongs to $\mathbf{B}_{-}( p_1^*\eta_2)$, the $c \leq \varepsilon(X, L, x) \leq b$.
       
\hspace*{5mm} (\romannumeral 3)      If $x$ is on some curve whose class is proportional to $\overline{\delta_3}$, then $\varepsilon(X, L, x) = c$, where $\overline{\delta_3} = \delta_3 + \mu_{11}\delta_1 + \mu_{21}\delta_2$.
\end{thm}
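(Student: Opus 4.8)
The plan is to reduce everything to estimating $L\cdot B$ for an arbitrary reduced irreducible curve $B\ni x$, written as $B\equiv p\delta_1+q\delta_2+r\overline{\delta_3}$ with $p,q,r\ge 0$ and $m=\mult_xB$. A computation with the dual pairings (from the relations recalled in Section 2 and the definition of $\overline{\delta_3}$) gives $\tau_1\cdot B=p$, $\tau_2\cdot B=q$, $F\cdot B=r$ and $(p_2^*\eta_1)\cdot B=p+\mu_{11}r$; since $\tau_1\cdot\overline{\delta_3}=\tau_2\cdot\overline{\delta_3}=0$ and $F\cdot\overline{\delta_3}=1$ one also gets $L\cdot\overline{\delta_3}=c$, hence $L\cdot B=ap+bq+cr$. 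Theorem 4.1 already yields $\varepsilon(X,L,x)\ge c$ for every $x$, while the fibre lines $\delta_1,\delta_2$ give $\varepsilon(X,L,x)\le L\cdot\delta_1=a$ and $\le L\cdot\delta_2=b$ for every $x$. Thus the ``$x\in\mathbf{B}_{-}$'' halves of (i) and (ii) are immediate ($c\le\varepsilon\le a$, resp. $c\le\varepsilon\le b$), and what remains is the reverse bound off the base locus together with (iii).

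For (iii) I would use that $F\cdot\overline{\delta_3}=1$, so a curve $B$ with $[B]=\overline{\delta_3}$ is a section of $\pi_1\circ p_2$ over $C$, hence reduced, smooth, and $m=1$; this is exactly the direction of the extremal curve $D$ built in the proof of Theorem 3.1 (which satisfies $\tau_1\cdot D=\tau_2\cdot D=0$). For such a section $L\cdot B/m=L\cdot\overline{\delta_3}=c$, so $\varepsilon(X,L,x)\le c$, and combined with the lower bound this gives $\varepsilon(X,L,x)=c$. For a curve of class $\lambda\overline{\delta_3}$ with $\lambda>1$ one needs in addition the Bézout equality $\mult_xB=F\cdot B=\lambda$, which holds for the minimal sections.

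The heart of the theorem is the bound $\varepsilon(X,L,x)\ge a$ for $x\notin\mathbf{B}_{-}(p_2^*\eta_1)$ in (i); I split the curves $B\ni x$ into two types. If $B$ lies in a fibre $F\cong\mathbb{P}^{1}\times\mathbb{P}^{r_2-1}$ (recall $r_1=2$), then $\tau_1|_F,\tau_2|_F,F|_F$ have types $(1,0),(0,1),0$, so $L|_F$ is of type $(a,b)$ and Lemma 4.2 together with $a\le b$ gives $L\cdot B/m\ge\min\{a,b\}=a$. If $B$ is not contained in a fibre, then $B$ is not contracted by $p_2$ and Bézout gives $r=F\cdot B\ge m$. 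Here I would use that a normalised unstable rank-$2$ bundle has maximal sub-line-bundle of degree $0$, whence $\mu_{11}=\deg E_1<0$ (so $|\mu_{11}|\ge 1$) and $[C_0]=\eta_1$ for the negative section $C_0\subset\mathbb{P}(E_1)$, and moreover $\mathbf{B}_{-}(p_2^*\eta_1)=p_2^{-1}(C_0)$. Since $x\notin p_2^{-1}(C_0)$, the image $B'=p_2(B)$ is an irreducible curve through $p_2(x)\notin C_0$, so $B'\neq C_0$ and the projection formula gives $(p_2^*\eta_1)\cdot B=\deg(B/B')\,(C_0\cdot B')\ge 0$. This reads $p+\mu_{11}r\ge 0$, i.e. $p\ge|\mu_{11}|r\ge r\ge m$, whence $L\cdot B=ap+bq+cr\ge ap\ge am$. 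The two cases give $\varepsilon(X,L,x)\ge a$, and with the universal upper bound $\le a$ we conclude $\varepsilon(X,L,x)=a$; part (ii) is identical after exchanging the two factors and using $r_2=2$, $c\le b\le a$.

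The main obstacle is the non-fibre case, and within it the two structural inputs feeding it: the identification $\mathbf{B}_{-}(p_2^*\eta_1)=p_2^{-1}(C_0)$, which requires the compatibility $\mathbf{B}_{-}(p_2^*\eta_1)=p_2^{-1}(\mathbf{B}_{-}(\eta_1))$ for the flat morphism $p_2$ together with the classical fact that $\mathbf{B}_{-}(\eta_1)=C_0$ on the ruled surface (here $[C_0]=\eta_1$ and $C_0^2=\deg E_1<0$); and the extraction from ``normalised and unstable of rank $2$'' of the numerics $\mu_{11}=\deg E_1<0$ and $[C_0]=\eta_1$, which is precisely what makes $c-a\mu_{11}\ge 0$ and upgrades $(p_2^*\eta_1)\cdot B\ge 0$ into the strong inequality $p\ge m$. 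I would also double-check the multiplicity and projection-formula step $(p_2^*\eta_1)\cdot B=\deg(B/B')(C_0\cdot B')$ and the Bézout sharpness invoked in (iii).
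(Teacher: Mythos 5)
Your proposal is correct in substance and its skeleton coincides with the paper's: you write $B\equiv p\delta_1+q\delta_2+r\overline{\delta_3}$, split into the fibre case (Lemma 4.2 with $r_1=2$ and $a\le b$) and the non-fibre case (B\'ezout giving $r=F\cdot B\ge m$), use the normalisation numerics $\mu_{11}=\deg E_1\le -1$ to convert $(p_2^*\eta_1)\cdot B\ge 0$, i.e.\ $p+\mu_{11}r\ge 0$, into $L\cdot B\ge am$, and get the upper bounds from $\delta_1,\delta_2$ and the lower bound $\varepsilon\ge c$ from Theorem 4.1; your chain $p\ge|\mu_{11}|r\ge r\ge m$ is just a cleaner version of the paper's $\frac{ap+bq+cr}{m}\ge\frac{r}{m}(c-a\mu_{11})\ge -a\mu_{11}\ge a$.

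The one genuine divergence is how $(p_2^*\eta_1)\cdot B\ge 0$ is obtained, and here your route is heavier than necessary. The paper gets it in one line from the tautological property of restricted base loci: if $D\cdot B<0$ for an irreducible curve $B$, then every effective divisor in $|m(D+A)|$ (with $A$ a small ample class) must contain $B$, so $B\subseteq\mathbf{B}_{-}(D)$; hence $B\not\subseteq\mathbf{B}_{-}(p_2^*\eta_1)$ already forces $(p_2^*\eta_1)\cdot B\ge 0$, with no description of $\mathbf{B}_{-}(p_2^*\eta_1)$ ever needed. You instead invoke the identification $\mathbf{B}_{-}(p_2^*\eta_1)=p_2^{-1}(C_0)$ and then argue via the projection formula; the projection-formula step itself is fine. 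But note that your proof only uses the easy inclusion $p_2^{-1}(C_0)\subseteq\mathbf{B}_{-}(p_2^*\eta_1)$ (to know $p_2(x)\notin C_0$), and that inclusion is elementary: through any point of $p_2^{-1}(C_0)$ there is an irreducible multisection $\tilde{C}$ of $p_2^{-1}(C_0)\to C$, and $p_2^*\eta_1\cdot\tilde{C}=\deg(\tilde{C}/C_0)\,C_0^2<0$, so $\tilde{C}\subseteq\mathbf{B}_{-}(p_2^*\eta_1)$. The reverse inclusion and the pullback compatibility $\mathbf{B}_{-}(p_2^*\eta_1)=p_2^{-1}(\mathbf{B}_{-}(\eta_1))$, which you single out as the main obstacle, are never needed, since the ``$x\in\mathbf{B}_{-}$'' half of (i) holds for all $x$ by Theorem 4.1 anyway. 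So your argument becomes complete once you supply that one easy lemma; in effect it is a hands-on proof, in this special geometry, of the general fact the paper quotes. Finally, your treatment of (iii) is actually more careful than the paper's: for class exactly $\overline{\delta_3}$, the observation that $F\cdot B=1$ makes $B$ a section, hence smooth with $m=1$, is a genuine improvement; for class $\lambda\overline{\delta_3}$ with $\lambda>1$ both you and the paper are incomplete, since B\'ezout gives $\lambda\ge m$, the wrong direction for the needed $m\ge\lambda$ --- you at least flag this, whereas the paper hides it in the expression $L\cdot\overline{\delta_3}/\mult_x\overline{\delta_3}$.
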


\begin{proof}
Let $B \subseteq X$ be a reduced and irreducible curve passing through $x \in X$ and $m$ be the multiplicity of $B$ at $x$. Let $B = p\delta_1 + q\delta_2 + r \overline{\delta_3} \in \overline{\NE}(X) \subseteq N_1(X)$,  
where $p, q, r$ are in $\mathbb{R}_{\geq 0}$ and $\overline{\delta_3} = \delta_3 + \mu_{11}\delta_1 + \mu_{21}\delta_2$.
\medskip  

First, assume that $ c \leq a \leq b$. Let $x$ be a point outside of $\mathbf{B}_{-} (p_2^*\eta_1)$.
Then, $B$ is also not contained in $\mathbf{B}_{-}( p_2^*\eta_1)$. Hence, $ p_2^*\eta_1 \cdot B \geq 0$ i.e,
\begin{align*}
p_2^*\eta_1 \cdot ( p\delta_1 + q\delta_2 + r \overline{\delta_3}) \geq 0.
\end{align*}
 which implies,  $p + r\mu_{11} \geq 0$.
 
 Now if $B$ is not contained in the fibre, then by Case(I) in the proof of Theorem 4.1, we get $r\geq m$. Hence,
 \begin{align*}
   \varepsilon(X,L, x) = \frac{ap + bq + cr}{m} \geq \frac{r}{m}(c - a\mu_{11}) + \frac{bq}{m} \geq \frac{r}{m}(c - a\mu_{11}) \geq ( c - a\mu_{11}) \geq - a\mu_{11} \geq a .
\end{align*} 
  ( since $\rank(E_1) = 2$ and $E_1$ is normalised, $\mu(Q_{11}) = \mu_{11} = \deg(Q_{11}) \leq -1$).
  
  And if $B$ is contained in the fibre, then by Case (II) in the proof of Theorem 4.1, we get, $( p+q ) \geq m$. Hence,
  \begin{align*}
  \varepsilon(X,L, x) = \frac{ap + bq}{m} \geq \frac{a(p + q)}{m} \geq a
\end{align*}
as our assumption is $b\geq a \geq c$. We already know that $\varepsilon (X,L,x) \leq a$ from the proof of $(4.1.1)$. So, $\varepsilon(X,L,x) = a$.
If $x$ belongs to $\mathbf{B}_{-}( p_2^*\eta_1)$, then it is obvious that $c \leq \varepsilon(X,L,x) \leq a$.
This completes the proof of $(\romannumeral 1)$. A similar kind of argument will prove $(\romannumeral 2)$.
  
  To prove $(\romannumeral 3)$, observe that $ L\cdot \overline{\delta_3} = c$. So, 
\begin{align*}
  \varepsilon(X, L, x) \leq \frac{L \cdot \overline{\delta_3}}{\mult_x \overline{\delta_3}} \leq \frac{c}{\mult_x \overline{\delta_3}} \leq c.
 \end{align*}
 Therefore, by the above inequality and first part of theorem $4.1$, we get,  $\varepsilon(X, L, x) = c$.
 \end{proof}
 \bigskip
 \begin{corl}
  Assume the hypotheses of  Theorem 4.3 holds and let $L$ be an ample line bundle on $X$ numerically equivalent to $ a\tau_1 + b\tau_2 + cF \in N^1(X)$. Then, we have,
  
  $(\it i) \hspace{2mm} \varepsilon(X,L) = \min\{a,b,c\}$.
  
  $(\it ii) \hspace{2mm} \varepsilon(X,L,1) \leq \min\{a,b\}$.
 \end{corl}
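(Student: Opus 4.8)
The plan is to play the uniform lower bound $\varepsilon(X,L,x)\ge\min\{a,b,c\}$ from Theorem 4.1 against a small supply of explicit test curves that furnish matching upper bounds, handling the infimum $\varepsilon(X,L)$ and the supremum $\varepsilon(X,L,1)$ separately. The lower bound needed throughout is immediate: since Theorem 4.1 gives $\varepsilon(X,L,x)\ge\min\{a,b,c\}$ for every $x$, taking the infimum over $x$ yields $\varepsilon(X,L)\ge\min\{a,b,c\}$.

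First I would dispose of (ii). For any point $x=(x_1,x_2)\in X$ the proof of Theorem 4.1 already exhibits two irreducible curves through $x$ inside a single fibre $F\cong\mathbb{P}^{r_1-1}\times\mathbb{P}^{r_2-1}$, namely the line $x_1\times l_2$ of class $\delta_1$ and the line $l_1\times x_2$ of class $\delta_2$, each smooth at $x$, hence of multiplicity one, with $L\cdot\delta_1=a$ and $L\cdot\delta_2=b$. Consequently $\varepsilon(X,L,x)\le\min\{a,b\}$ for \emph{every} $x$, and passing to the supremum gives $\varepsilon(X,L,1)=\sup_{x}\varepsilon(X,L,x)\le\min\{a,b\}$, which is (ii). This step uses only the fibrewise geometry and none of the instability hypotheses.

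For (i) I would combine the lower bound above with the production of a point where equality holds, splitting by which of $a,b,c$ is smallest. If $a=\min\{a,b,c\}$ or $b=\min\{a,b,c\}$, then by (4.1.1) respectively (4.1.2) the map $x\mapsto\varepsilon(X,L,x)$ is constant, equal to the minimum at every point, so $\varepsilon(X,L)=\min\{a,b,c\}$ with nothing further to check. The remaining case $c=\min\{a,b,c\}$ is exactly the regime of Theorem 4.3: by part (\romannumeral 3) of that theorem, at any $x$ lying on an irreducible curve whose class is proportional to $\overline{\delta_3}$ one has $\varepsilon(X,L,x)=c$, and such a point forces $\varepsilon(X,L)\le c=\min\{a,b,c\}$. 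Together with the lower bound this gives equality in all three cases.

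The main obstacle, and the only genuinely delicate input, is ensuring that in the case $c=\min$ Theorem 4.3(\romannumeral 3) is not vacuous, i.e. that an irreducible curve with class proportional to $\overline{\delta_3}$ actually exists. I would extract one from the cycle $D=p_2^\ast[\mathbb{P}(Q_{11})]\cdot p_1^\ast[\mathbb{P}(Q_{21})]\cdot\tau_1^{\,n_{11}-1}\cdot\tau_2^{\,n_{21}-1}$ built in the proof of Theorem 3.1, which has class exactly $\overline{\delta_3}$ and spans an extremal ray of $\overline{\NE}(X)$. Geometrically $D$ cuts the honest subvariety $\mathbb{P}(Q_{11})\times_C\mathbb{P}(Q_{21})$ down to dimension one against the nef classes $\tau_1,\tau_2$; because $Q_{11}$ and $Q_{21}$ are semistable, these restrict to boundary classes of the nef cone on that subvariety, so the cut-down locus is supported on genuine curves (minimal multisections over $C$). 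Any irreducible component of such a representative lies in $\overline{\NE}(X)$, and since the total class $\overline{\delta_3}$ is extremal, every component must be a nonnegative multiple of $\overline{\delta_3}$, yielding the required curve. I expect the one subtle point to be verifying that this cut-down cycle is represented by an actual irreducible curve rather than merely by a pseudo-effective limit; once that is secured, Theorem 4.3(\romannumeral 3) produces a point with $\varepsilon(X,L,x)=c$ and (i) follows.
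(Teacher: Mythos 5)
Your proposal coincides with the paper's own argument in all but one respect: the uniform lower bound from Theorem 4.1, the upper bound $\varepsilon(X,L,x)\le\min\{a,b\}$ at \emph{every} point via the lines of class $\delta_1$ and $\delta_2$ (which gives (ii)), and the three-way case split for (i) using (4.1.1), (4.1.2) and Theorem 4.3(iii) are exactly the paper's proof. Where you go beyond the paper is in noticing that Theorem 4.3(iii) is conditional: it only says something if an irreducible curve of class proportional to $\overline{\delta_3}$ actually exists. The paper invokes (iii) without establishing this existence, so your instinct that this is the delicate input is sound.

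However, the construction you sketch for such a curve does not work, and the step you yourself flag as unverified is precisely the one that fails. The class $D=p_2^\ast[\mathbb{P}(Q_{11})]\cdot p_1^\ast[\mathbb{P}(Q_{21})]\cdot\tau_1^{n_{11}-1}\cdot\tau_2^{n_{21}-1}$ is pseudo-effective exactly because it is an effective class intersected with nef classes (this is how Theorem 3.1 puts $D$ in $\overline{\Eff}_1(X)$), but a pseudo-effective $1$-cycle class need not contain any effective cycle: the restrictions of $\tau_1,\tau_2$ to $\mathbb{P}(Q_{11})\times_C\mathbb{P}(Q_{21})$ are boundary (non-ample) nef classes which need not be represented by effective divisors, so there is no ``cut-down locus'', and extremality of the ray cannot conjure curves that are not there. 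Indeed existence can genuinely fail once $n_{11}>1$: take $E_1=A\oplus Q$ with $A$ a line bundle of positive degree and $Q$ stable of rank $2$ and degree $0$ whose Narasimhan--Seshadri unitary representation has dense image in $SU(2)$ (genus of $C$ at least $2$). Then $Q_{11}=Q$ and $\mu_{11}=0$, and for every finite cover $\nu:\tilde{C}\to C$ the pullback $\nu^\ast Q$ remains stable of degree $0$, hence admits no degree-zero line bundle quotient; consequently every curve $B\subset X$ finite over $C$ satisfies $\tau_1\cdot B>0$, so \emph{no} curve has class proportional to $\overline{\delta_3}$, and your ``minimal multisections'' do not exist. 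What actually closes the gap is the rank-two, normalised hypotheses carried by Theorem 4.3(i)/(ii): there $Q_{11}$ and $Q_{21}$ are line bundles, so $n_{11}=n_{21}=1$ and $\mathbb{P}(Q_{11})\times_C\mathbb{P}(Q_{21})\cong C$ is itself an irreducible curve of class exactly $\overline{\delta_3}$ (your $D$ with no nef factors needed), through any point of which Theorem 4.3(iii) applies. With that substitution your treatment of the case $c=\min\{a,b,c\}$ becomes complete, and the rest of your argument agrees with the paper's.
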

 \begin{proof}
  Since $\varepsilon(X,L,x) \geq \min\{a,b,c\},$ for all $x\in X$, we have, 
  \begin{align*}
  \varepsilon(X,L) = \inf_{x\in X} \hspace{1mm} \varepsilon(X,L,x) \geq \min\{a,b,c\}.
  \end{align*}
  Now, if $ \min\{a,b,c\} = a , \hspace{1mm} \varepsilon(X,L,x) = \varepsilon (X,L)= \min\{a,b,c\} = a , \forall x \in X $. Similarly,
  if $ \min\{a,b,c\} = b,  \varepsilon(X,L,x) = \varepsilon (X,L)= \min\{a,b,c\} = b , \forall x \in X $. Also, when $ \min\{a,b,c\} = c$, then, $\varepsilon(X,L,x) = \min\{a,b,c\} = c \geq \varepsilon (X,L)$, if $x$ is on some curve of class proportional to  $\overline{\delta_3}$. Therefore,
  combining all three cases, we have, $\varepsilon(X,L) = \min\{a,b,c\}$.
  
 In the proof of Theorem 4.1 we have showed that for all $x \in X$, $\varepsilon(X,L,x) \leq \min\{a, b\}$. So, this implies that $\varepsilon(X,L,1)\leq \min\{a, b\}$.
  \end{proof}
\begin{thm}
 Let $E_1$ be a semistable vector bundle of rank $r_1$ and $E_2$ be an unstable vector bundle of rank $r_2$ over a smooth curve $C$ and let $ X = \mathbb{P}(E_1) \times_C \mathbb{P}(E_2)$.
Let $L$ be an ample bundle on $X$ numerically equivalent to $a\tau_1 + b\tau_2 + cF \in N^1(X)$. When $c= \min\{a, b, c\}$  the Seshadri constants of $L$ have the following properties.

 Assume that $c \leq b \leq a$, $\rank(E_2) = 2$ and $E_2$ is normalised.

  (i) if $x$ is a point outside $\mathbf{B}_{-}( p_1^*\eta_2 )$,   then $\varepsilon(X,L, x) = b$.
   
  (ii) if $x$ belongs to $\mathbf{B}_{-}( p_1^*\eta_2)$, then $c \leq \varepsilon(X,L, x) \leq b$.
\end{thm}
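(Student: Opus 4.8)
The plan is to reprove the statement by following the argument for part $(\romannumeral 2)$ of Theorem 4.3 essentially verbatim; the point is that the \emph{instability} of $E_1$ was never actually used there, the only features of the bundles entering the estimates being the rank-two normalised structure of $E_2$ together with the ordering $c\le b\le a$. So I would begin as in Theorem 4.1: let $B\subseteq X$ be a reduced and irreducible curve through $x$ with $\mult_xB=m$, and use Corollary 3.2 to write $B=p\delta_1+q\delta_2+r\overline{\delta_3}$ with $p,q,r\in\mathbb{R}_{\geq 0}$. From the dual-basis relations underlying Theorem 3.1 one records
\begin{align*}
L\cdot B=ap+bq+cr,\qquad p_1^*\eta_2\cdot B=q+r\mu_{21},
\end{align*}
and, since $E_2$ has rank $2$ and is normalised, $\mu_{21}=\deg(Q_{21})\le -1$.

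For $(i)$, suppose $x\notin\mathbf{B}_{-}(p_1^*\eta_2)$. Then no curve through $x$ can lie in $\mathbf{B}_{-}(p_1^*\eta_2)$, so $B\not\subseteq\mathbf{B}_{-}(p_1^*\eta_2)$ and hence $p_1^*\eta_2\cdot B\ge 0$, i.e.\ $q+r\mu_{21}\ge 0$. I would then split exactly as in Theorem 4.1. If $B$ is not contained in a fibre of $\pi_1\circ p_2$, Case~I of Theorem 4.1 gives $r\ge m$; using $q\ge -r\mu_{21}$ and $ap\ge 0$,
\begin{align*}
\frac{L\cdot B}{m}=\frac{ap+bq+cr}{m}\ge\frac{r}{m}\bigl(c-b\mu_{21}\bigr)\ge c-b\mu_{21}\ge -b\mu_{21}\ge b,
\end{align*}
the last steps using $c\ge 0$, $r/m\ge 1$, and $-\mu_{21}\ge 1$. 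If instead $B$ lies in a fibre $F\cong\mathbb{P}^{r_1-1}\times\mathbb{P}^1$, then $r=0$ and $L|_F$ has type $(a,b)$, so Lemma 4.2 yields $L\cdot B/m\ge\min\{a,b\}=b$. Combining this with the universal upper bound $\varepsilon(X,L,x)\le L\cdot\delta_2=b$ established in the proof of Theorem 4.1 gives $\varepsilon(X,L,x)=b$.

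For $(ii)$, when $x\in\mathbf{B}_{-}(p_1^*\eta_2)$ the intersection $p_1^*\eta_2\cdot B$ may be negative, so only the general lower bound $\varepsilon(X,L,x)\ge\min\{a,b,c\}=c$ from Theorem 4.1 survives; together with $\varepsilon(X,L,x)\le b$ this gives $c\le\varepsilon(X,L,x)\le b$. The one step carrying genuine content is the claim $p_1^*\eta_2\cdot B\ge 0$ for curves through a point off $\mathbf{B}_{-}(p_1^*\eta_2)$, which rests on the defining property of the restricted base locus (any irreducible curve meeting a divisor negatively is contained in its restricted base locus); everything else is intersection-theoretic bookkeeping identical to Theorem 4.3, so I expect no further obstacle.
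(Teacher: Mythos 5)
Your proposal is correct and takes essentially the same approach as the paper: the paper's own proof is the single sentence ``The proof is similar to the proof of the Theorem 4.3,'' and your argument is exactly that adaptation with the roles of the two factors swapped --- using $p_1^*\eta_2\cdot B=q+r\mu_{21}\geq 0$ for curves through points off $\mathbf{B}_{-}(p_1^*\eta_2)$, the fact that $\mu_{21}=\deg(Q_{21})\leq -1$ for the rank-two normalised unstable $E_2$, the Case I/Case II split and Lemma 4.2 from Theorem 4.1, and the upper bound $\varepsilon(X,L,x)\leq L\cdot\delta_2=b$. Your observation that the instability of $E_1$ plays no role in that argument (so semistability of $E_1$ is harmless) is precisely what justifies the paper's one-line proof.
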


\begin{proof}
 The proof is similar to the proof of the Theorem 4.3.
\end{proof}

\subsection*{Acknowledgement}
The authors would like to thank Prof . Krishna Hanumanthu, CMI Chennai and  Prof. D.S. Nagaraj, IISER Tirupati for helpful discussions at every stage of this work. The authors would also like to thank the referee of an earlier version of this article for his valuable comments and useful suggestions towards the overall improvement of the content. This work is supported financially by a fellowship from IMSc,Chennai (HBNI),
DAE, Government of India.

\end{document}